\newtheorem{proposition}{Proposition}[section]
\newtheorem{corollary}[proposition]{Corollary}
\newtheorem{theorem}[proposition]{Theorem}
\theoremstyle{definition}
\newtheorem{definition}[proposition]{Definition}
\newtheorem{example}[proposition]{Example}
\newtheorem{examples}[proposition]{Examples}
\newcommand{\thlabel}[1]{\label{th:#1}}
\newcommand{\thref}[1]{Theorem~\ref{th:#1}}
\newcommand{\selabel}[1]{\label{se:#1}}
\newcommand{\seref}[1]{Section~\ref{se:#1}}
\newcommand{\colabel}[1]{\label{co:#1}}
\newcommand{\coref}[1]{Corollary~\ref{co:#1}}
\newcommand{\exlabel}[1]{\label{ex:#1}}
\newcommand{\exref}[1]{Example~\ref{ex:#1}}
\newcommand{\eqlabel}[1]{\label{eq:#1}}
\newcommand{\equref}[1]{(\ref{eq:#1})}
\def\ot{\otimes}
\def\NN{{\mathbb N}}
\def\ZZ{{\mathbb Z}}
\newcommand{\Cc}{\mathcal{C}}
\def\*C{{}^*\hspace*{-1pt}{\Cc}}
\def\text#1{{\rm {\rm #1}}}
\begin{document}

\title[Constructing Hopf braces]{Constructing Hopf braces}

\author{A. L. Agore}
\address{Simion Stoilow Institute of Mathematics of the Romanian Academy, P.O. Box 1-764, 014700 Bucharest, Romania \textbf{and} Vrije Universiteit Brussel, Pleinlaan 2, B-1050 Brussels, Belgium} \email{ana.agore@vub.be and
ana.agore@gmail.com}

\thanks{This work was  supported by a grant of Romanian Ministery of Research and Innovation, CNCS - UEFISCDI, project number PN-III-P1-1.1-TE-2016-0124, within PNCDI III. The author is a fellow of FWO (Fonds voor Wetenschappelijk Onderzoek Flanders).}

\keywords{matched pair of Hopf algebras/Lie algebras/groups, universal enveloping algebra, product, equalizer, limit, complete category}

\subjclass[2010]{16T25, 16S40, 18A35, 16T05}

\begin{abstract}
We investigate Hopf braces, a concept  recently introduced by Angiono, Galindo and Vendramin (\cite{AGV}) in connection to the quantum Yang-Baxter equation. More precisely, we propose two methods for constructing Hopf braces. The first one uses matched pairs of Hopf algebras while the second one relies on category-theoretic tools.
\end{abstract}

\maketitle

\section*{Introduction}
The Yang-Baxter equation first appeared in the field of statistical mechanics in independent papers by Yang (\cite{yang}) and respectively Baxter (\cite{bax}). Given a vector space $V$, a linear map $R: V \ot V \to V \ot V$ is called a solution of the quantum Yang-Baxter equation if 
\begin{eqnarray}
R^{12}R^{13}R^{^{23}} = R^{23}R^{13}R^{12}\eqlabel{-10}
\end{eqnarray}
in ${\rm End}(V^{\ot 3})$, where $R^{ij}$ denotes $R$ acting on the $i$-th and respectively $j$-th components. It is well-known that \equref{-10} is equivalent to the braid equation in the sense that $R$ is a solution of the quantum Yang-Baxter equation if and only if $R \tau$ is a solution of the braid equation, where $\tau$ denotes the flip map (see for instance \cite[Proposition 114]{CMZ2002}).

Although the problem of finding solutions to the Yang-Baxter equation is wide open, the interest it generated has led to the development of many fields and even to the appearance of new ones such as the theory of quantum groups. Due to its connections with various seemingly unrelated areas of mathematics and physics (e.g., knot theory, non-commutative geometry, conformal field theory, quantum groups, integrable systems, etc.), the Yang-Baxter equation was intensively studied from many different points of view and using a broad range of techniques. 

Following an idea of Drinfel'd' (\cite{D}), set-theoretical solutions of the Yang-Baxter equation are being studied quite intensively. An important class of solutions is the non-degenerate involutive set-theoretical solutions which led to the introduction of braces by Rump (\cite{Rump}). The equivalent definition of a brace proposed in \cite{eric} opened the way for generalizing this concept to the non-commutative setting: skew-braces were introduced in  \cite{GV} as a tool for studying the non-involutive set-theoretical solutions. These newly introduced concepts allow for methods from both group theory and ring theory to be used in studying set-theoretical solutions of the Yang-Baxter equation. 
Hopf braces were recently considered in \cite{AGV} as the quantum version of skew-braces and, as expected, they provide solutions to the quantum Yang-Baxter equation. The purpose of this paper is to further investigate this new structures; more specifically we focus on constructing new examples of (cocommutative) Hopf braces. This is achieved on the one hand by using matched pairs of Hopf algebras and on the other hand by pursuing a category-theoretic path. Although the idea of using matched pairs of groups or Hopf algebras for constructing solutions of the Yang-Baxter equation has been used before (see for instance \cite{GIM}, \cite[Proposition 3.2]{AGV}) our approach is different from the existing ones.

The outline of the paper is as follows. In \seref{1} we collect some auxiliary material needed in the sequel. \seref{2} presents two methods of constructing new (cocommutative) Hopf braces from a given (cocommutative) Hopf brace which is a part of a matched pair of (cocommutative) Hopf algebras (\thref{main0}, \thref{main}). One notable consequence is \coref{11.11} which proves that any matched pair between two cocommutative Hopf algebras $A$ and $H$ gives rise to a cocommutative Hopf brace on the tensor product coalgebra $A \ot H$. Furthermore, any matched pair of groups or Lie algebras induces a cocommutative Hopf brace on the tensor product coalgebra of the corresponding group algebras, respectively the corresponding universal enveloping algebras (\exref{imp}). Several explicit examples are also presented. Finally, \seref{3} proves the completeness of the categories of Hopf braces and respectively cocommutative Hopf braces (\thref{limit}). In particular, we obtain that the category of bijective $1$-cocycles is also complete.

\section{Preliminaries}\selabel{1}

Throughout this paper, $k$ will be a field. Unless specified
otherwise, all vector spaces, tensor products, homomorphisms,
algebras, coalgebras, bialgebras, Lie algebras, Hopf algebras live over the field $k$. For a coalgebra
$C$, we will use Sweedler's $\Sigma$-notation $\Delta(c)=
c_{(1)}\ot c_{(2)}$ with suppressed summation sign. We use the classical notations for opposite and coopposite structures: $A^{op}$ is the opposite of the algebra $A$ while $C^{cop}$ stands for the coopposite of the coalgebra $C$. We refer to \cite{Sw} for further details concerning Hopf algebras and to \cite[Chapter 5]{CMZ2002} for a thorough background on the Yang-Baxter equation.

Without going into great detail, we collect here some useful notions and results which will be used in the sequel.
Recall that a \emph{Hopf brace} (\cite{AGV}) over a coalgebra $(H, \Delta, \varepsilon)$ consists of two Hopf algebra structures on $H$, denoted by $(H, \cdot, 1, \Delta, \varepsilon, S)$ and respectively $(H, \circ, 1_{\circ}, \Delta, \varepsilon, T)$, compatible in the sense that for all $x$, $y$, $z \in H$ we have:
\begin{equation}\eqlabel{1.1}
x \circ (yz) = \bigl(x_{(1)} \circ y\bigl) S(x_{(2)}) \bigl(x_{(3)} \circ z\bigl)
\end{equation}
where the multiplication of the Hopf algebra $(H, \cdot, 1, \Delta, \varepsilon, S)$ is denoted by juxtaposition. In order to keep the notation simple, we will denote the two Hopf algebra structures of a Hopf brace simply by $H$ and respectively $H_{\circ}$.
By setting $x = y = 1_{\circ}$ and respectively $x = z = 1_{\circ}$ in \equref{1.1}, it can be easily seen that in any Hopf brace we have $1 = 1_{\circ}$ (see \cite{AGV}).
A Hopf brace will be called cocommutative if its underlying coalgebra is cocommutative.

The main feature of cocommutative Hopf braces is that they provide solutions to the braid equation (\cite[Theorem 2.3, Corollary 2.4]{AGV}). More precisely, if $(H, \cdot, \circ, 1, \Delta, \varepsilon, S, T)$ is a cocommutative Hopf brace then the operator $c: H \ot H \to H \ot H$ defined below provides a solution to the braid equation:
$$
c(x \ot y) = S(x_{(1)}) (x_{(2)} \circ y_{(1)}) \ot T\bigl(S(x_{(3)})(x_{(4)} \circ y_{(2)})\bigl) \circ x_{(5)} \circ y_{(3)}
$$

\begin{examples}\exlabel{ex1.1}
$1)$ Any Hopf algebra $(H, \cdot, 1, \Delta, \varepsilon, S)$  gives rise to a Hopf brace by considering $x \circ y = xy$ for all $x$, $y \in H$. If $H$ is cocommutative, the corresponding solution of the braid equation is given by:
$$c(x \ot y) = y_{(1)} \ot S(y_{(2)}) x y_{(3)}$$

$2)$ If $(H, \cdot, 1, \Delta, \varepsilon, S)$  is a Hopf algebra such that $S^{2} = {\rm Id}_{H}$ then we can define a new multiplication on $H$ by $x \circ y = yx$, for all $x$, $y \in H$, which together with the underlying coalgebra structure of $H$ form a new Hopf algebra. Moreover, it is straightforward to check that \equref{1.1} is trivially fulfilled and thus  $(H, \cdot, \circ, 1, \Delta, \varepsilon, S, S)$ is a Hopf brace. If $H$ is cocommutative (which automatically implies $S^{2} = {\rm Id}_{H}$) then we obtain the following solution of the braid equation:
$$c(x \ot y) = S(x_{(1)}) y x_{(2)} \ot x_{(3)}$$

$3)$ An important class of examples can be obtained from group theory and respectively from Lie algebra theory. More precisely, in light of our previous example, any group algebra and respectively any universal enveloping algebra of a Lie algebra is an example of a cocommutative Hopf brace.
\end{examples}

Given two Hopf braces $(H, \cdot, \circ)$ and $(H', \cdot, \circ)$, a $k$-linear map $f$ between the two underlying vector spaces is called a \emph{morphism of Hopf braces} if both $f: H \to H^{'}$ and $f: H_{\circ} \to H^{'}_{\circ}$ are morphisms of Hopf algebras. Hopf braces (resp. cocommutative Hopf braces) together with morphisms of Hopf braces form a category which we denote by $k$-HopfBr (resp. by $k$-(co)HopfBr). In what follows $k$-CoAlg stands for the category of coalgebras.

Another important notion, related to Hopf braces, is that of a  \emph{bijective $1$-cocycle} (\cite[Definition 1.10]{AGV}). If $A$ and $H$ are two Hopf algebras such that $A$ is an $H$-module algebra, then a coalgebra isomorphism $\pi: H \to A$ is called a bijective $1$-cocycle if for any $h$, $k \in H$ we have:
$$
\pi(hk) = \pi(h_{(1)}) (h_{(2)} \rightharpoonup \pi(k)).
$$
A homomorphism between two bijective $1$-cocycles $\pi: H \to A$ and $\sigma: K \to B$ is a pair $(f,\,g)$ of Hopf algebra homomorphisms $f: H \to K$ and $g: A\to B$ such that for all $a \in A$, $h \in H$ we have:
$$\sigma f = g \pi,\qquad g(h  \rightharpoonup a) = f(h)  \rightharpoonup g(a).$$ 
The category of bijective $1$-cocycles defined above will be denoted by $\mathfrak{C}$ and its full subcategory whose objects are all bijective $1$-cocycles $\pi: H \to A$, where $A$ is a fixed Hopf algebra, by $\mathfrak{C}(A)$. 

Since the first method we use for constructing Hopf braces relies on
matched pairs of Hopf algebras and the corresponding bicrossed
product, it will be worth our while to recall these notions.
\begin{definition}
A \textit{matched pair} of Hopf algebras is a quadruple $(A, H,
\triangleleft, \triangleright)$, where $A$ and $H$ are Hopf
algebras, $\triangleleft : H \otimes A \rightarrow H$,
$\triangleright: H \otimes A \rightarrow A$ are coalgebra maps
such that $(A, \triangleright)$ is a left $H$-module coalgebra,
$(H, \triangleleft)$ is a right $A$-module coalgebra and the
following compatibilities hold for any $a$, $b\in A$, $x$, $y\in
H$:
\begin{eqnarray}
x \triangleright1_{A} &{=}& \varepsilon_{H}(x)1_{A}, \quad 1_{H}
\triangleleft a =
\varepsilon_{A}(a)1_{H} \eqlabel{mp1} \\
x \triangleright(ab) &{=}& (x_{(1)} \triangleright a_{(1)}) \bigl
( (x_{(2)}\triangleleft a_{(2)})\triangleright b \bigl)
\eqlabel{mp2} \\
(x y) \triangleleft a &{=}& \bigl( x \triangleleft (y_{(1)}
\triangleright a_{(1)}) \bigl) (y_{(2)} \triangleleft a_{(2)})
\eqlabel{mp3} \\
x_{(1)} \triangleleft a_{(1)} \otimes x_{(2)} \triangleright
a_{(2)} &{=}& x_{(2)} \triangleleft a_{(2)} \otimes x_{(1)}
\triangleright a_{(1)} \eqlabel{mp4}
\end{eqnarray}
If $(A, H, \triangleleft, \triangleright)$ is a matched pair of
Hopf algebras then the $k$-module $A\ot H$ with the tensor coalgebra structure and the multiplication defined as follows for all $a$, $c\in A$, $x$, $y\in H$:
\begin{equation}\eqlabel{0010}
(a \ot x)  (b\ot y):= a (x_{(1)}\triangleright
b_{(1)}) \ot (x_{(2)} \triangleleft b_{(2)}) y
\end{equation}
is a Hopf algebra called the \textit{bicrossed product} of $A$ and $H$ and will be denoted by $A \bowtie H$. The antipode on $A \bowtie H$ is given by:
\begin{equation}\eqlabel{antipbic}
S_{A \bowtie H} ( a \bowtie x ) = S_H (x_{(2)}) \triangleright S_A
(a_{(2)}) \, \bowtie \, S_H (x_{(1)}) \triangleleft S_A (a_{(1)})
\end{equation}
for all $a\in A$ and $x \in H$.
\end{definition}

Examples of bicrossed products include the semi-direct (smash) product as defined by Molnar (\cite{Mo}) in the cocommutative case. More precisely, assume that $(A, \triangleright)$ is a left $H$-module bialgebra such that for all $a\in A$ and $x \in H$ we have:
\begin{equation}\eqlabel{smash1}
x_{(1)} \otimes x_{(2)} \triangleright a =  x_{(2)} \otimes
x_{(1)} \triangleright a
\end{equation}
Then $(A, H,  \triangleleft, \triangleright)$ is a matched pair of Hopf algebras where $\triangleleft : H \otimes A \rightarrow B$ is the trivial action (i.e., $x \triangleleft  a = x \varepsilon(a)$ for all $a\in A$, $x \in H$) and the multiplication on the corresponding bicrossed product takes the form:
\begin{equation}\eqlabel{smash2}
(a \# x) (b \# y):= a \, (x_{(1)} \triangleright b )\,\#\,
x_{(2)} y
\end{equation}

\section{Constructing Hopf braces from matched pairs of Hopf algebras}\selabel{2}

In this section we use matched pairs of (cocommutative) Hopf algebras in order to construct (cocommutative) Hopf braces. More precisely, we obtain new Hopf braces by starting with a given Hopf brace which is part of a matched pair of Hopf algebras.  

\begin{theorem}\thlabel{main0}
Let $(A, \cdot, \circ)$ be a Hopf brace and $H$ a commutative, cocommutative Hopf algebra. If $(A, \blacktriangleright)$ is a left $H$-module algebra and $(A_{\circ}, H, \blacktriangleright, \blacktriangleleft)$ is a matched pair of Hopf algebras such that for all $a$, $a' \in A$, $x \in H$ we have:
\begin{eqnarray}
x \blacktriangleleft aa' = (x_{(1)} \blacktriangleleft a) S(x_{(2)}) (x_{(3)} \blacktriangleleft a')\eqlabel{1.4.6}
\end{eqnarray}
then $A \ot H$ together with the tensor product coalgebra and the following algebra structures:
\begin{eqnarray}
(a \ot x) (b \ot y) &=& ab \ot xy \eqlabel{1.2.5}\\
(a \ot x) \circ (b \ot y) &=& a \circ (x_{(1)}  \blacktriangleright b_{(1)}) \ot (x_{(2)}  \blacktriangleleft b_{(2)}) y \eqlabel{1.2.6}
\end{eqnarray}
is a Hopf brace.
\end{theorem}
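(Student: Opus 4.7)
The plan is to verify the three ingredients of a Hopf brace structure on $A\otimes H$: that $(A\otimes H,\cdot)$ is a Hopf algebra on the tensor coalgebra, that $(A\otimes H,\circ)$ is another Hopf algebra on the same coalgebra with the same unit, and that the brace compatibility \equref{1.1} holds.

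The first two are essentially immediate from the hypotheses. The product \equref{1.2.5} equipped with the tensor coalgebra is the usual tensor product Hopf algebra $A\otimes H$, whose antipode is $S_A\otimes S_H$. The product \equref{1.2.6} coincides, after denoting the multiplication of $A_{\circ}$ by $\circ$, with the bicrossed product formula \equref{0010} associated to the matched pair $(A_{\circ}, H, \blacktriangleright, \blacktriangleleft)$; hence this second structure is the Hopf algebra $A_{\circ}\bowtie H$. Its underlying coalgebra is the tensor coalgebra of $A_{\circ}$ and $H$, which agrees with the tensor coalgebra used for the first product since $A$ and $A_{\circ}$ share their coalgebra structure by the definition of a Hopf brace. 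The common unit is $1_A\otimes 1_H = 1_{A_\circ}\otimes 1_H$, by the observation following \equref{1.1}.

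For the brace compatibility I would fix $u=a\otimes x$, $v=b\otimes y$, $w=c\otimes z$ and expand both sides. The left-hand side $u\circ(vw)=(a\otimes x)\circ(bc\otimes yz)$ is rewritten via \equref{1.2.6}; its $A$-slot is then simplified using the left $H$-module algebra axiom to split $x_{(1)}\blacktriangleright(b_{(1)}c_{(1)})$ as a product in $A$, after which the brace relation \equref{1.1} for $A$ is applied to the factor $a\circ[\cdots]$; its $H$-slot is simplified by applying \equref{1.4.6} to $x_{(2)}\blacktriangleleft(b_{(2)}c_{(2)})$. The right-hand side $(u_{(1)}\circ v)\,S(u_{(2)})\,(u_{(3)}\circ w)$ is expanded using the tensor coalgebra, the antipode $S_A\otimes S_H$, and the two formulas \equref{1.2.5} and \equref{1.2.6}. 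After these expansions both sides become tensors whose two slots each contain five Sweedler components of $x$, together with the same occurrences of $a_{(i)}$, $b_{(j)}$, $c_{(k)}$, $y$, $z$, $S_A$, $S_H$.

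The final step is to match the two sides: commutativity of $H$ is used to slide $y$ and $z$ on the right past the central factor $S_H(x_{(2)})$ so as to reproduce the $yz$ appearing on the left; cocommutativity of $H$ is then used to identify the two \emph{a priori} different distributions of $\Delta^{4}(x)$ by suitably permuting its five Sweedler components in both tensor slots simultaneously. The main obstacle is this Sweedler-index bookkeeping: the cleanest strategy is to rewrite both sides as expressions in a fixed $\Delta^{4}(x)$ from the outset, so that the appeal to cocommutativity of $H$ reduces to a direct comparison of Sweedler components slot-by-slot, with the rest of the verification being a mechanical application of the hypotheses.
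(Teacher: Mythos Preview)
Your proposal is correct and follows essentially the same route as the paper: the two Hopf algebra structures are identified as the tensor product $A\otimes H$ and the bicrossed product $A_{\circ}\bowtie H$, and the brace compatibility is checked by expanding via \equref{1.2.6}, splitting the $A$-slot with the $H$-module algebra axiom and the brace relation \equref{1.1} for $A$, splitting the $H$-slot with \equref{1.4.6}, and then invoking commutativity of $H$ to move $y,z$ and cocommutativity of $H$ to permute the five Sweedler components of $x$. The only cosmetic difference is that the paper transforms the left-hand side step by step into the right-hand side rather than expanding both sides separately, but the ingredients and their order of use are identical.
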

\begin{proof}
The multiplication on $A \ot H$ defined by \equref{1.2.5} together with the tensor product of coalgebras is obviously a Hopf algebra. Furthermore, since $(A_{\circ}, H, \blacktriangleright, \blacktriangleleft)$ is a matched pair of Hopf algebras then $A \ot H$ with the multiplication defined by \equref{1.2.6} and the tensor product of coalgebras is also a Hopf algebra.
Thus, we are left to prove that \equref{1.1} holds true. Indeed, for all $a$, $b$, $c \in A$ and $x$, $y$, $z \in H$ we have:
\begin{eqnarray*}
&&\hspace*{-2mm} (a\ot x) \circ \bigl((b \ot y) (c \ot z)\bigl) = a \circ (\underline{x_{(1)} \blacktriangleright b_{(1)}c_{(1)}}) \, \ot \, (x_{(2)} \blacktriangleleft b_{(2)}c_{(2)}) yz\\
&=& \hspace*{-2mm} \underline{a \circ \bigl((x_{(1)} \blacktriangleright b_{(1)})(x_{(2)} \blacktriangleright  c_{(1)})}\bigl) \, \ot \, (x_{(3)} \blacktriangleleft b_{(2)}c_{(2)}) yz\\
&\stackrel{\equref{1.1}} {=}&\hspace*{-2mm} \bigl(a_{(1)} \circ (x_{(1)} \blacktriangleright b_{(1)})\bigl) S(a_{(2)}) \bigl(a_{(3)} \circ (x_{(2)} \blacktriangleright c_{(1)})\bigl) \, \ot\, (\underline{x_{(3)} \blacktriangleleft b_{(2)}c_{(2)}}) yz\\
&\stackrel{\equref{1.4.6}} {=}& \hspace*{-2mm} \bigl(a_{(1)} \circ (x_{(1)} \blacktriangleright b_{(1)})\bigl) S(a_{(2)}) \bigl(a_{(3)} \circ (\underline{x_{(2)}} \blacktriangleright c_{(1)})\bigl) \, \ot\, (\underline{x_{(3)}} \blacktriangleleft b_{(2)}) S(\underline{x_{(4)}}) (x_{(5)} \blacktriangleleft c_{(2)}) yz\\
&=& \hspace*{-2mm} \bigl(a_{(1)} \circ (x_{(1)} \blacktriangleright b_{(1)})\bigl) S(a_{(2)}) \bigl(a_{(3)} \circ (x_{(4)} \blacktriangleright c_{(1)})\bigl) \, \ot\, (x_{(2)} \blacktriangleleft b_{(2)}) \underline{S(x_{(3)}) (x_{(5)} \blacktriangleleft c_{(2)}) yz}\\
&=& \hspace*{-2mm} \bigl(a_{(1)} \circ (x_{(1)} \blacktriangleright b_{(1)})\bigl) S(a_{(2)}) \bigl(a_{(3)} \circ (x_{(4)} \blacktriangleright c_{(1)})\bigl) \, \ot\, (x_{(2)} \blacktriangleleft b_{(2)}) y S(x_{(3)}) (x_{(5)} \blacktriangleleft c_{(2)}) z\\
&\stackrel{\equref{1.2.5}} {=}& \hspace*{-2mm} \bigl(a_{(1)} \circ (x_{(1)(1)} \blacktriangleright b_{(1)}) \, \ot \, (x_{(1)(2)} \blacktriangleleft  b_{(2)}) y \bigl) \bigl(S(a_{(2)}) \ot S(x_{(2)})\bigl) \\
&& \hspace*{-2mm} \bigl(a_{(3)} \circ (x_{(3)(1)} \blacktriangleright c_{(1)}) \, \ot \, (x_{(3)(2)} \blacktriangleleft  c_{(2)}) z \bigl)\\
&\stackrel{\equref{1.2.6}} {=}& \hspace*{-2mm} \Bigl((a_{(1)} \ot x_{(1)}) \circ (b \ot y)\Bigl) S\bigl(a_{(2)} \ot x_{(2)}\bigl) \Bigl((a_{(3)} \ot x_{(3)}) \circ (c \ot z)\Bigl)
\end{eqnarray*}\hspace*{-2mm}
where in the second equality we used the fact that $(A, \blacktriangleright)$ is a left $H$-module algebra while the fifth and respectively the sixth equality follow from the cocommutativity and commutativity of $H$.
\end{proof}

\begin{corollary}\colabel{11.00}
Let $(A, \cdot, \circ)$ be a Hopf brace and $H$ a commutative, cocommutative Hopf algebra. If $(A, \blacktriangleright)$ is a left $H$-module algebra and $(A_{\circ}, \blacktriangleright)$ is a left $H$-module bialgebra then $A \ot H$ together with the tensor product coalgebra and the following algebra structures:
\begin{eqnarray}
(a \ot x) (b \ot y) &=& ab \ot xy \eqlabel{1.2.50}\\
(a \ot x) \circ (b \ot y) &=& a \circ (x_{(1)}  \blacktriangleright b_{(1)}) \ot x_{(2)}  y \eqlabel{1.2.60}
\end{eqnarray}
is a Hopf brace.
\end{corollary}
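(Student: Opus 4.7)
The plan is to derive this as a direct specialization of \thref{main0} by taking the trivial right action $\blacktriangleleft : H \ot A \to H$, $x \blacktriangleleft a := \varepsilon(a) x$. The first step is to observe that with this choice, the formula \equref{1.2.6} collapses to \equref{1.2.60}, since $(x_{(2)} \blacktriangleleft b_{(2)}) y = \varepsilon(b_{(2)}) x_{(2)} y = x_{(2)} y$. Therefore it suffices to check that the hypotheses of \thref{main0} are all met.

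Next, I would verify that $(A_{\circ}, H, \blacktriangleright, \blacktriangleleft)$ is a matched pair of Hopf algebras. This is exactly the content of the smash product construction recalled right before \seref{2}: if $(A_{\circ}, \blacktriangleright)$ is a left $H$-module bialgebra and the compatibility \equref{smash1} holds, then $(A_{\circ}, H, \blacktriangleright, \blacktriangleleft_{\mathrm{triv}})$ is a matched pair with underlying bicrossed product given by the smash product multiplication \equref{smash2}. Condition \equref{smash1} is immediate here because $H$ is cocommutative, so both sides of $x_{(1)} \ot x_{(2)} \blacktriangleright a = x_{(2)} \ot x_{(1)} \blacktriangleright a$ coincide. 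Thus the matched pair hypothesis of \thref{main0} is satisfied, and the fact that $(A, \blacktriangleright)$ is a left $H$-module algebra is assumed directly in the statement.

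Finally I would check the compatibility \equref{1.4.6}. With the trivial action, the left hand side is $x \blacktriangleleft (aa') = \varepsilon(aa') x = \varepsilon(a)\varepsilon(a') x$, while the right hand side computes as
\[
(x_{(1)} \blacktriangleleft a) S(x_{(2)}) (x_{(3)} \blacktriangleleft a') = \varepsilon(a)\varepsilon(a')\, x_{(1)} S(x_{(2)}) x_{(3)} = \varepsilon(a)\varepsilon(a')\, x,
\]
using the antipode axiom in $H$. Hence \equref{1.4.6} holds trivially, and all hypotheses of \thref{main0} are in force. The conclusion of that theorem then yields the desired Hopf brace structure on $A \ot H$ given by \equref{1.2.50} and \equref{1.2.60}. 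There is no genuine obstacle here; the only point worth flagging is that the cocommutativity of $H$ is precisely what is needed to promote the module bialgebra $(A_{\circ}, \blacktriangleright)$ to a matched pair via the trivial right action, and this is what allows the general construction of \thref{main0} to apply.
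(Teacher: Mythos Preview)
Your proposal is correct and follows essentially the same route as the paper: specialize \thref{main0} to the trivial right action $x \blacktriangleleft a = \varepsilon(a)x$, use cocommutativity of $H$ to verify \equref{smash1} so that the smash product gives the required matched pair, and note that \equref{1.4.6} is trivially satisfied. Your verification of \equref{1.4.6} and of the reduction of \equref{1.2.6} to \equref{1.2.60} is slightly more explicit than the paper's, but the argument is the same.
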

\begin{proof}
We apply \thref{main0} for the trivial right $A_{\circ}$-module coalgebra on $H$, i.e., $x \blacktriangleleft a = x \,\varepsilon(a)$ for all $a \in A$, $x \in H$. Since $(A_{\circ}, \blacktriangleright)$ is a left $H$-module bialgebra and \equref{smash2} is fulfilled due to the cocommutativity of $H$,
the tensor product of coalgebras $A \ot H$ with the multiplication defined by \equref{1.2.60} is a Hopf algebra.
Now the conclusion follows from \thref{main0} by noticing that in this case the compatibility condition \equref{1.4.6} is trivially fulfilled.
\end{proof}

\begin{corollary}\colabel{22.00}
Let $(A, \cdot, \circ)$ be a cocommutative Hopf brace and $H$ a commutative, cocommutative Hopf algebra. If $(H, \blacktriangleleft)$ is a right $A_{\circ}$-module bialgebra such that for all $a$, $a' \in A$, $x \in H$ we have:
\begin{eqnarray}
x \blacktriangleleft aa' = (x_{(1)} \blacktriangleleft a) S(x_{(2)}) (x_{(3)} \blacktriangleleft a')\eqlabel{1.4.6}
\end{eqnarray}
then $A \ot H$ together with the tensor product coalgebra and the following algebra structures:
\begin{eqnarray}
(a \ot x) (b \ot y) &=& ab \ot xy \eqlabel{1.2.500}\\
(a \ot x) \circ (b \ot y) &=& a \circ  b_{(1)} \ot (x  \blacktriangleleft b_{(2)}) y \eqlabel{1.2.600}
\end{eqnarray}
is a Hopf brace.
\end{corollary}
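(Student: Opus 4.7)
The plan is to reduce the statement to \thref{main0} by choosing the left $H$-action $\blacktriangleright$ on $A$ to be the trivial one, namely $x \blacktriangleright b := \varepsilon_{H}(x)\, b$. With this choice, the first tensor factor in \equref{1.2.6} collapses to $a \circ b_{(1)}$ and the second factor to $(x \blacktriangleleft b_{(2)})\, y$, so the multiplication \equref{1.2.6} specializes precisely to \equref{1.2.600}, while \equref{1.2.5} is literally \equref{1.2.500}. Consequently, the content of the corollary is exactly what \thref{main0} would produce, provided its hypotheses can be verified for this choice of $\blacktriangleright$ together with the given $\blacktriangleleft$.

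The next step is to check those hypotheses. That $(A, \blacktriangleright)$ is a left $H$-module algebra is immediate since the action is trivial. Among the matched pair axioms \equref{mp1}--\equref{mp4} for $(A_{\circ}, H, \blacktriangleright, \blacktriangleleft)$, conditions \equref{mp1} and \equref{mp2} reduce, after collapsing every occurrence of $\blacktriangleright$ via the counit, to $1_{H} \blacktriangleleft a = \varepsilon(a)\, 1_{H}$ together with $\blacktriangleleft$ being a coalgebra map, both of which are built into the assumption that $(H, \blacktriangleleft)$ is a right $A_{\circ}$-module bialgebra. Axiom \equref{mp3} simplifies to $(xy) \blacktriangleleft a = (x \blacktriangleleft a_{(1)})(y \blacktriangleleft a_{(2)})$, which is exactly the compatibility of the action with the multiplication of the module bialgebra $H$. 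Finally, the extra compatibility \equref{1.4.6} demanded by \thref{main0} is precisely the additional assumption imposed in the corollary.

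The main point — essentially the only place where anything nontrivial is used — is axiom \equref{mp4}. With the trivial $\blacktriangleright$ it reduces to
\[
x \blacktriangleleft a_{(1)} \ot a_{(2)} \;=\; x \blacktriangleleft a_{(2)} \ot a_{(1)},
\]
which follows at once from cocommutativity of $A$ (hence of $A_{\circ}$, which shares its underlying coalgebra). This is dual to the way \coref{11.00} uses cocommutativity of $H$ to dispatch \equref{mp4} when it is $\blacktriangleleft$ that is trivial, and it is the reason the current corollary — unlike \coref{11.00} — needs $A$ to be cocommutative. Once all hypotheses have been verified, \thref{main0} immediately yields the desired Hopf brace structure on $A \ot H$.
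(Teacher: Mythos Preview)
Your proposal is correct and follows essentially the same route as the paper: take $\blacktriangleright$ to be the trivial action, verify that $(A_{\circ}, H, \blacktriangleright, \blacktriangleleft)$ is a matched pair (with \equref{mp4} forced by cocommutativity of $A$), and apply \thref{main0}. The paper's proof is terser but identical in strategy; your expanded verification of \equref{mp1}--\equref{mp4} is accurate, though the way you attribute the reductions of \equref{mp1} and \equref{mp2} is slightly imprecise (it is the counit compatibility $\varepsilon(x \blacktriangleleft a) = \varepsilon(x)\varepsilon(a)$, part of $\blacktriangleleft$ being a coalgebra map, that makes \equref{mp2} collapse).
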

\begin{proof}
We use again \thref{main0} for the trivial left $H$-module coalgebra on $A_{\circ}$, i.e., $x \blacktriangleright a = a \,\varepsilon(x)$ for all $a \in A$, $x \in H$. As $(H, \blacktriangleleft)$ is a right $A_{\circ}$-module bialgebra then the tensor product of coalgebras $A \ot H$ with the multiplication defined by \equref{1.2.60} is a Hopf algebra. Indeed, it is straightforward to see that the compatibility conditions \equref{mp1}-\equref{mp4} hold; for instance \equref{mp4} is trivially fulfilled due to the cocommutativity of $A$. The conclusion now follows from  \thref{main0}.
\end{proof}

\begin{example}
Consider the cyclic groups $C_{3}$ and $C_{6}$ and let $a$, respectively $b$, denote the generators of the aforementioned groups. Then $(k[C_{3}], k[C_{6}], \triangleright, \triangleleft)$ is a matched pair of cocommutative Hopf algebras, where:
\begin{eqnarray*}
&&\hspace*{-7mm} 1 \triangleright a^{j} = a^{j},\,\,\, b^{i} \triangleright 1 = 1,\,\,\,
b^{i} \triangleright a = \left \{\begin{array}{lll}
a,\, \mbox {if i is even}\\
a^{2},\, \mbox {if i is odd}
\end{array},\right.\,
b^{i} \triangleright a^{2} = \left \{\begin{array}{lll}
a^{2},\, \mbox {if i is even}\\
a,\, \mbox {if i is odd}
\end{array},\right. \\
&&\hspace*{-7mm} b^{i} \triangleleft 1 = b^{i},\,\,\, 1 \triangleleft a^{j} = 1,\,\,\,
b^{i} \triangleleft a = \left \{\begin{array}{lll}
b^{i},\, \mbox {if i is even}\\
b^{i+2},\, \mbox {if i is odd}
\end{array},\right.\,
b^{i} \triangleleft a^{2} = \left \{\begin{array}{lll}
b^{i},\, \mbox {if i is even}\\
b^{i+4},\, \mbox {if i is odd}
\end{array},\right.
\end{eqnarray*}
for all $i \in \{1,\, 2, \cdots, 5\}$, $j \in \{1,\, 2\}$. Moreover it can be easily seen by a straightforward computation that $\triangleleft: k[C_{6}] \ot k[C_{3}] \to k[C_{6}] $ defined above satisfies compatibility \equref{1.4.6} which in this case comes down to proving that for all $g \in C_{6}$ and $h$, $h' \in C_{3}$ we have $g \triangleleft hh' = (g \triangleleft h) g^{-1} (g \triangleleft h')$.
Indeed, for instance we have:
\begin{eqnarray*}
&& b^{2k+1} \triangleleft a^{2} = b^{2k+5} = b^{2k+3}b^{-2k-1}b^{2k+3} = (b^{2k+1} \triangleleft a) b^{-2k-1} (b^{2k+1} \triangleleft a)\\
&& b^{2k+1} \triangleleft a^{3} = b^{2k+7} = b^{2k+1} = b^{2k+3}b^{-2k-1}b^{2k+5} = (b^{2k+1} \triangleleft a) b^{-2k-1} (b^{2k+1} \triangleleft a^{2})
\end{eqnarray*}
all $k \in \NN$.
Therefore, we have a cocommutative Hopf brace on $k[C_{3}] \ot k[C_{6}]$ as in \thref{main0}.
\end{example}

\begin{theorem}\thlabel{main}
Let $A$ be a Hopf algebra and $(H, \cdot, \circ)$ a cocommutative Hopf brace. If $(A, H, \triangleright, \triangleleft)$ is matched pair of Hopf algebras and $(A, \blacktriangleright)$ is a left $H_{\circ}$-module bialgebra such that for all $a \in A$, $x$, $x' \in H$ we have:
\begin{eqnarray}
x \blacktriangleright (x' \triangleright a) &=& \bigl[(x_{(1)} \circ x') S(x_{(2)}) \triangleright (x_{(3)} \blacktriangleright a)\bigl] \eqlabel{2.1}\\
x \circ (x' \triangleleft a) &=& \bigl[ (x_{(1)} \circ x') S(x_{(2)}) \triangleleft (x_{(3)} \blacktriangleright a)\bigl] x_{(4)}\eqlabel{2.2}
\end{eqnarray}
then $A \ot H$ together with the tensor product coalgebra and the following algebra structures:
\begin{eqnarray}
(a \ot x) (b \ot y) &=& a(x_{(1)}  \triangleright b_{(1)}) \ot (x_{(2)}  \triangleleft b_{(2)})y \eqlabel{2.5}\\
(a \ot x) \circ (b \ot y) &=& a(x_{(1)}  \blacktriangleright b) \ot x_{(2)}  \circ y \eqlabel{2.6}
\end{eqnarray}
is a Hopf brace.
\end{theorem}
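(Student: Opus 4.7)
The proof follows the same strategy as \thref{main0}. First, one checks that $A\ot H$ with the multiplication \equref{2.5} is exactly the bicrossed product $A\bowtie H$ associated to the matched pair $(A,H,\triangleright,\triangleleft)$, hence a Hopf algebra. For the second structure, since $H$ is cocommutative so is $H_{\circ}$; combined with the assumption that $(A,\blacktriangleright)$ is a left $H_{\circ}$-module bialgebra, the compatibility \equref{smash1} is automatic, so \equref{2.6} endows $A\ot H$ with the Molnar smash-product Hopf algebra structure $A\# H_{\circ}$.

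The core of the proof consists in verifying the Hopf brace compatibility \equref{1.1}. Starting from $(a\ot x)\circ\bigl((b\ot y)(c\ot z)\bigr)$, one first expands the inner product via \equref{2.5} and then applies \equref{2.6}. Using that $(A,\blacktriangleright)$ is a module algebra, the first tensor factor of the result becomes $a(x_{(1)}\blacktriangleright b)\bigl(x_{(2)}\blacktriangleright(y_{(1)}\triangleright c_{(1)})\bigr)$, to which hypothesis \equref{2.1} applies. In the second tensor factor, the brace relation \equref{1.1} for $H$ itself distributes $\circ$ across juxtaposition, after which \equref{2.2} rewrites the resulting subterm $x_{(i)}\circ(y_{(2)}\triangleleft c_{(2)})$.

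At this point the expression contains fragments of the form $\bigl[(x_{(i)}\circ y_{(j)})S(x_{(k)})\bigr]\triangleright(x_{(l)}\blacktriangleright c_{(1)})$ in the first tensor slot and $\bigl[(x_{(i')}\circ y_{(j')})S(x_{(k')})\triangleleft(x_{(l')}\blacktriangleright c_{(2)})\bigr]x_{(m)}$ in the second. These should be reorganised using the cocommutativity of $H$ so that, together with the surrounding terms, they group into three bicrossed-product factors matching the right-hand side of \equref{1.1}: the two outer circle products $(a_{(1)}\ot x_{(1)})\circ(b\ot y)$ and $(a_{(3)}\ot x_{(3)})\circ(c\ot z)$ sandwiching the middle antipode $S(a_{(2)}\ot x_{(2)})$, which by \equref{antipbic} has precisely the form $S(x_{(2)(2)})\triangleright S(a_{(2)(2)})\ot S(x_{(2)(1)})\triangleleft S(a_{(2)(1)})$. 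The final reassembly is then a direct application of the bicrossed-product multiplication \equref{0010}.

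The main obstacle is Sweedler-index bookkeeping: $x$ must be split into roughly six factors, and cocommutativity of $H$ is used repeatedly both to permute indices and to cancel pairs via the antipode identity $x_{(i)}S(x_{(j)})=\varepsilon(x_{(i)})1$. Since $A$ is \emph{not} assumed cocommutative, special care is required to never permute Sweedler indices of $a$, $b$ or $c$; all index manipulations must occur on the $H$-side only. The compatibilities \equref{2.1} and \equref{2.2} are tailored so that the antipode factors $S(x_{(k)})$ produced by the brace equation for $H$ land in exactly the positions dictated by \equref{antipbic} for the middle antipode factor of the right-hand side, making the reassembly work without having to invoke cocommutativity of $A$.
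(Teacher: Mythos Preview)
Your proposal is correct and follows essentially the same approach as the paper: both recognise \equref{2.5} as the bicrossed product $A\bowtie H$ and \equref{2.6} as the Molnar smash product $A\# H_{\circ}$ (with \equref{smash1} automatic by cocommutativity), and both verify \equref{1.1} by a direct Sweedler computation using the module-algebra property, the brace relation on $H$, and the two hypotheses \equref{2.1}, \equref{2.2}. The only cosmetic difference is the direction of the calculation: the paper starts from the right-hand side $\bigl[(a_{(1)}\ot x_{(1)})\circ(b\ot y)\bigr]\,S(a_{(2)}\ot x_{(2)})\,\bigl[(a_{(3)}\ot x_{(3)})\circ(c\ot z)\bigr]$, expands via the bicrossed product and \equref{antipbic}, uses \equref{mp2} to collapse the $S(a_{(i)})a_{(i+1)}$ pairs, and then applies \equref{2.2}, \equref{1.1}, \equref{2.1} in that order to reach the left-hand side; you describe the reverse route. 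The paper's direction is slightly cleaner precisely because the $a$-antipode factors are present at the outset and cancel naturally, whereas in your direction the ``reassembly'' step is really a second computation showing that the expanded right-hand side collapses to the same intermediate expression---so in practice your final step coincides with the paper's entire calculation.
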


\begin{proof}
Since $(A, H, \triangleright, \triangleleft)$ is a matched pair of Hopf algebras then $A \ot H$ with the multiplication defined by \equref{2.5} and the tensor product of coalgebras is a Hopf algebra with antipode given by \equref{antipbic}. Furthermore, as $(A, \blacktriangleright)$ is a left $H_{\circ}$-module bialgebra then the tensor product of coalgebras $A \ot H$ with the multiplication defined by \equref{2.6} is again a Hopf algebra. Indeed, notice that  \equref{smash1} is trivially fulfilled by the cocommutativity assumption on $H$. Therefore, we are left to prove that \equref{1.1} holds true. Indeed, for all $a$, $b$, $c \in A$ and $x$, $y$, $z \in B$ we have:
\begin{eqnarray*}
&&\hspace*{-10mm} \bigl[(a_{(1)} \ot x_{(1)}) \circ (b \ot y)\bigl] S\bigl(a_{(2)} \ot x_{(2)}\bigl) \bigl[(a_{(3)} \ot x_{(3)}) \circ (c \ot z)\bigl]  \\
&& \hspace*{-10mm}= a_{(1)} (x_{(1)} \blacktriangleright b) \underline{\bigl[(x_{(2)} \circ y_{(1)}) S(x_{(7)}) \triangleright S(a_{(4)})\bigl] \Bigl[\Bigl((x_{(3)} \circ y_{(2)}) S(x_{(6)})\triangleleft S(a_{(3)})\Bigl) \triangleright a_{(5)} (x_{(8)} \blacktriangleright c_{(1)}) \Bigl]}\\
&& \hspace*{-10mm} \ot \,  \Bigl[\Bigl((x_{(4)} \circ y_{(3)}) S(x_{(5)})\triangleleft S(a_{(2)})\Bigl) \triangleleft \, a_{(6)} (x_{(9)} \blacktriangleright c_{(2)}) \Bigl] (x_{(10)} \circ z)\\
&& \hspace*{-10mm}\stackrel{\equref{mp2}} {=} a_{(1)} (x_{(1)} \blacktriangleright b) \Bigl[(x_{(2)} \circ y_{(1)}) S(x_{(5)}) \triangleright  \underline{S(a_{(3)})a_{(4)}} (x_{(6)} \blacktriangleright c_{(1)}) \Bigl] \\
&& \hspace*{-10mm} \ot \,  \Bigl[\Bigl((x_{(3)} \circ y_{(2)}) S(x_{(4)})\triangleleft S(a_{(2)})\Bigl) \triangleleft \, a_{(5)} (x_{(7)} \blacktriangleright c_{(2)}) \Bigl] (x_{(8)} \circ z)\\
&& \hspace*{-10mm} = a_{(1)} (x_{(1)} \blacktriangleright b) \Bigl[(x_{(2)} \circ y_{(1)}) S(x_{(5)}) \triangleright (x_{(6)} \blacktriangleright c_{(1)}) \Bigl] \\
&& \hspace*{-10mm} \ot \,  \Bigl[\underline{\Bigl((x_{(3)} \circ y_{(2)}) S(x_{(4)})\triangleleft S(a_{(2)})\Bigl) \triangleleft \, a_{(3)} (x_{(7)} \blacktriangleright c_{(2)})} \Bigl] (x_{(8)} \circ z)\\
&& \hspace*{-10mm} = a_{(1)} (x_{(1)} \blacktriangleright b) \Bigl[(x_{(2)} \circ y_{(1)}) S(x_{(5)}) \triangleright (x_{(6)} \blacktriangleright c_{(1)}) \Bigl] \\
&& \hspace*{-10mm} \ot \,  \Bigl[\Bigl((x_{(3)} \circ y_{(2)}) S(x_{(4)}) \Bigl)\triangleleft \Bigl(\underline{S(a_{(2)}) a_{(3)}} (x_{(7)} \blacktriangleright c_{(2)})\Bigl) \Bigl]  (x_{(8)} \circ z)\\
&& \hspace*{-10mm} = a (x_{(1)} \blacktriangleright b) \Bigl[(x_{(2)} \circ y_{(1)}) S(\underline{x_{(5)}}) \triangleright (\underline{x_{(6)}} \blacktriangleright c_{(1)}) \Bigl]\ot \,  \Bigl[\Bigl((\underline{x_{(3)}} \circ y_{(2)}) S(\underline{x_{(4)}}) \Bigl)\triangleleft (x_{(7)} \blacktriangleright c_{(2)})\Bigl]  \\
&& \hspace*{-10mm} (x_{(8)} \circ z)\\
&& \hspace*{-10mm} = a (x_{(1)} \blacktriangleright b) \Bigl[(x_{(2)} \circ y_{(1)}) S(x_{(3)}) \triangleright (x_{(4)} \blacktriangleright c_{(1)}) \Bigl]\ot \,  \Bigl[\Bigl((x_{(5)} \circ y_{(2)}) S(x_{(6)}) \Bigl)\triangleleft (x_{(7)} \blacktriangleright c_{(2)})\Bigl] \\
&& \hspace*{-10mm}(x_{(8)} \circ z)\\
&& \hspace*{-10mm}=  a (x_{(1)} \blacktriangleright b) \Bigl[\bigl(x_{(2)} \circ y_{(1)}\bigl) S(x_{(3)})  \triangleright \bigl(x_{(4)} \blacktriangleright c_{(1)}\bigl) \Bigl] \ot \,\underline{\Bigl[\bigl(x_{(5)} \circ y_{(2)}\bigl) S(x_{(6)}) \triangleleft \bigl(x_{(7)}  \blacktriangleright c_{(2)} \bigl) \Bigl]}\\
&& \hspace*{-10mm} \underline{x_{(8)}} S(x_{(9)}) (x_{(10)} \circ z)\\
&& \hspace*{-10mm}\stackrel{\equref{2.2}} {=} a (x_{(1)} \blacktriangleright b) \Bigl[\bigl(x_{(2)} \circ y_{(1)}\bigl) S(x_{(3)})  \triangleright \bigl(x_{(4)} \blacktriangleright c_{(1)}\bigl) \Bigl] \ot \, \underline{\Bigl[x_{(5)} \circ \bigl(y_{(2)} \triangleleft c_{(2)}\bigl) \bigl] S(x_{(6)}) \bigl(x_{(7)} \circ z\bigl)} \\
&& \hspace*{-10mm}\stackrel{\equref{1.1}} {=} \,\,a (x_{(1)} \blacktriangleright b) \underline{\Bigl[\bigl(x_{(2)} \circ y_{(1)}\bigl) S(x_{(3)})  \triangleright \bigl(x_{(4)} \blacktriangleright c_{(1)}\bigl) \Bigl]} \ot \, x_{(5)} \circ \bigl[(y_{(2)} \triangleleft c_{(2)}) z\bigl] \\
&& \hspace*{-10mm}\stackrel{\equref{2.1}} {=} a \underline{(x_{(1)} \blacktriangleright b) \bigl[x_{(2)} \blacktriangleright (y_{(1)} \triangleright c_{(1)})} \bigl]\,\, \ot \, \,x_{(3)} \circ \bigl[(y_{(2)} \triangleleft c_{(2)}) z\bigl] \\
&& \hspace*{-9mm} = \,\,a \bigl[x_{(1)} \blacktriangleright b (y_{(1)} \triangleright c_{(1)}) \bigl]\,\, \ot \,\, x_{(2)} \circ \bigl[(y_{(2)} \triangleleft c_{(2)}) z\bigl]\\
&& \hspace*{-9mm} = \,\,(a\ot x) \circ \bigl((b \ot y) (c \ot z)\bigl)
\end{eqnarray*}
where the sixth equality follows by the cocommutativity of $H$ while in the eleventh equality we used the fact that  $(A, \blacktriangleright)$ is a left $H_{\circ}$-module algebra.
\end{proof}

\begin{corollary}\colabel{11.11}
If $(A, H, \triangleright, \triangleleft)$ is a matched pair of Hopf algebras with $H$ cocommutative then the tensor product coalgebra $A \ot H$ with the following algebra structures:
\begin{eqnarray*}
(a \ot x) (b \ot y) &=& \bigl(a(x_{(1)}  \triangleright b_{(1)}) \ot (x_{(2)}  \triangleleft b_{(2)})y \bigl)\\
(a \ot x) \circ (b \ot y) &=& \bigl(a b \ot yx \bigl)
\end{eqnarray*}
is a Hopf brace.
\end{corollary}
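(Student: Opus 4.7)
The plan is to derive this corollary as a direct specialization of \thref{main}. To invoke that theorem we need to equip $H$ with a Hopf brace structure and equip $A$ with a left $H_\circ$-module bialgebra structure. Since $H$ is cocommutative, we automatically have $S_H^2 = \Id_H$, so \exref{ex1.1}(2) applies and gives a Hopf brace $(H, \cdot, \circ)$ with $x \circ y := yx$ (and antipode $T = S_H$). For the module structure on $A$, the natural choice is the trivial one: $x \blacktriangleright a := \varepsilon_H(x)\, a$, which is tautologically a left $H_\circ$-module bialgebra action.

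With these choices, I would first check that the multiplication $\circ$ from \equref{2.6} collapses to the desired formula: indeed,
\[
(a \ot x) \circ (b \ot y) = a(x_{(1)} \blacktriangleright b) \ot x_{(2)} \circ y = a\,\varepsilon(x_{(1)})\,b \ot y\,x_{(2)} = ab \ot yx,
\]
while the multiplication \equref{2.5} is identical to the bicrossed product multiplication of the matched pair $(A,H,\triangleright,\triangleleft)$, so it agrees with the formula stated in the corollary.

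It then remains to verify the two compatibilities \equref{2.1} and \equref{2.2}. For \equref{2.1}, the left-hand side becomes $\varepsilon(x)(x' \triangleright a)$, while the right-hand side reads
\[
(x_{(1)} \circ x') S(x_{(2)}) \triangleright \bigl(\varepsilon(x_{(3)})\, a\bigl) = \bigl(x' x_{(1)} S(x_{(2)})\bigl) \triangleright a = \varepsilon(x)\,(x' \triangleright a),
\]
using $x_{(1)} \circ x' = x' x_{(1)}$ and the antipode axiom. An entirely analogous calculation reduces \equref{2.2} to $x \circ (x' \triangleleft a) = (x' \triangleleft a)\,x$, which is exactly the definition of $\circ$ on $H$.

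There is no real obstacle here: once one identifies the correct Hopf brace on $H$ (the opposite-multiplication one) and the correct trivial module structure on $A$, both hypotheses of \thref{main} become formal consequences of the antipode axiom and the definition of $\circ$, and the resulting algebra structures on $A \ot H$ match those in the statement. The only mild point to keep in mind is the cocommutativity of $H$, which is precisely what is needed for $\circ = \cdot^{op}$ to define a Hopf algebra on the same coalgebra, and for \thref{main} to apply in the first place.
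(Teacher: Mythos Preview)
Your proposal is correct and follows exactly the same route as the paper: apply \thref{main} with the cocommutative Hopf brace $(H,\cdot,\circ)$ given by $x\circ y=yx$ from \exref{ex1.1}(2) and the trivial left $H_\circ$-module bialgebra structure $x\blacktriangleright a=\varepsilon(x)a$. The paper merely asserts that \equref{2.1} and \equref{2.2} are then ``trivially fulfilled'', whereas you spell out the short verifications, but the argument is identical.
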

\begin{proof}
Indeed, this is an immediate consequence of \thref{main} by seeing $H$ as a cocommutative Hopf brace with $x \circ x' = x'x$ (see \exref{ex1.1}, 2)) and considering the left $H_{\circ}$-module bialgebra on $A$ to be the trivial one, i.e., $x \blacktriangleright a = \varepsilon(x) \,a$ for all $a \in A$, $x \in H$. It is now straightforward to see that in this case the compatibility conditions \equref{2.1} and \equref{2.2} are trivially fulfilled and the desired conclusion follows.
\end{proof}

\begin{example}\exlabel{imp}
\coref{11.11} allows for a plethora of examples of cocommutative
Hopf braces obtained from matched pairs of groups
(\cite[Definition IX.1.1]{Kassel}) and respectively matched pairs
of Lie algebras (\cite[Definition 8.3.1]{majid2}). More precisely,
any matched pair of groups $(G, H, \rightharpoonup,
\leftharpoonup)$ extends uniquely to a matched pair of cocommutative Hopf
algebras between the corresponding group Hopf algebras  $(k[G],
k[H], \rightharpoonup, \leftharpoonup)$ (\cite[Example 1, pg.
207]{Kassel}. Similarly, any matched pair of Lie algebras $(\mathfrak{g},
\mathfrak{h}, \triangleright, \triangleleft)$ can be uniquely
extended to a matched pair of cocommutative Hopf algebras between the
corresponding universal enveloping algebras $(U(\mathfrak{g}),
U(\mathfrak{h}), \triangleright, \triangleleft)$.

Since both the group Hopf algebras and the enveloping universal
algebras are cocommutative Hopf algebras, we can conclude that any
matched pair of groups gives rise to a cocommutative Hopf brace on
$k[G] \ot k[H]$ and any matched pair of Lie algebras gives rise to
a cocommutative Hopf brace on $U(\mathfrak{g})\ot U(\mathfrak{h})$
as in \coref{11.11}.
\end{example}

\begin{example}
Consider $k[T]$ to be the polynomial Hopf algebra and let $H$ be the Hopf algebra generated by $X$, $Y$ and $Z$ subject to the following relations:
$$XY-YX=Z,\,\, XZ=ZX,\,\, YZ=ZY,$$
and the coalgebra structure given by:
\begin{eqnarray*}
&&\Delta(X) = X \ot 1 + 1\ot X,\,\, \Delta(Y) = Y \ot 1 + 1\ot Y,\,\, \Delta(Z) = Z \ot 1 + 1\ot Z,\\
&&\hspace*{40mm}\varepsilon(X) = \varepsilon(Y) = \varepsilon(Z) = 0.
\end{eqnarray*}
It can be easily seen that $k[T]$ is the universal enveloping algebra of the $1$-dimensional abelian Lie algebra while $H$ is the universal enveloping algebra of the $3$-dimensional Heisenberg Lie algebra.
Then, for any $\alpha$, $\beta \in k$ we have a matched pair of cocommutative Hopf algebras $(k[T], H, \triangleright, \triangleleft)$ defined as follows for all $n \geq 1$:
\begin{eqnarray*}
&& X \triangleright T^{n} = Z \triangleright T^{n} = 0,\,\,\, Y \triangleright T^{n} =  \alpha \sum_{i=0}^{n-1}T^{n-i} (T - \alpha \beta)^{i},\\   
&& X^{n} \triangleleft T = n \beta X^{n-1} Z,\,\,\, Y^{n} \triangleleft T = (Z - \alpha \beta Y) \sum_{i=0}^{n-1}Y^{n-1-i} (Y + \alpha)^{i},\,\,\,  Y^{n} \triangleleft T =0
\end{eqnarray*}
Using \coref{11.11} we obtain a cocommutative Hopf brace on $k[T] \ot H$.
\end{example}

\begin{corollary}\colabel{22.22}
Let $A$, $H$ be two Hopf algebras with $H$ cocommutative. If $(A, \blacktriangleright)$ is a left $H^{op}$-module bialgebra and $(A, \triangleright)$ is a left $H$-module bialgebra such that for all $a \in A$, $x$, $x' \in H$ we have:
\begin{eqnarray}
x \blacktriangleright (x' \triangleright a) = x' \triangleright (x \blacktriangleright a)\eqlabel{1.1.1}
\end{eqnarray}
then the tensor product coalgebra $A \ot H$ with the following algebra structures:
\begin{eqnarray*}
(a \ot x) (b \ot y) &=& \bigl(a(x_{(1)}  \triangleright b) \ot x_{(2)} y \bigl)\\
(a \ot x) \circ (b \ot y) &=& \bigl(a(x_{(1)}  \blacktriangleright b) \ot y x_{(2)} \bigl)
\end{eqnarray*}
is a Hopf brace.
\end{corollary}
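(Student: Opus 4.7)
The plan is to apply \thref{main} to a carefully chosen data triple, in complete analogy with the proof of \coref{11.11}.

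First, I would turn the given $(A, \triangleright)$ into a matched pair with $H$ by letting $\triangleleft$ be the trivial right action, i.e.\ $x \triangleleft a = \varepsilon(a)x$. The cocommutativity of $H$ is exactly what makes \equref{mp4} hold (the other matched-pair axioms reduce to the fact that $(A, \triangleright)$ is a left $H$-module bialgebra, which is part of the hypothesis), so $(A, H, \triangleright, \triangleleft)$ is a matched pair of Hopf algebras. The associated bicrossed product multiplication \equref{2.5} collapses, via $\varepsilon(b_{(2)})$, to $a(x_{(1)} \triangleright b) \ot x_{(2)} y$, which matches the first product in the statement.

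Next, I would put on $H$ the cocommutative Hopf-brace structure of \exref{ex1.1}(2), namely $x \circ y := yx$; this is allowed because cocommutativity forces $S^2 = \Id_H$. With this choice $H_\circ$ has opposite multiplication to $H$, so a left $H^{op}$-module bialgebra structure on $A$ \emph{is} a left $H_\circ$-module bialgebra structure. Moreover, the circle product \equref{2.6} of \thref{main} becomes $a(x_{(1)} \blacktriangleright b) \ot x_{(2)} \circ y = a(x_{(1)} \blacktriangleright b) \ot y x_{(2)}$, again matching the statement.

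Then I would verify the two compatibilities \equref{2.1} and \equref{2.2}. Using $(x_{(1)} \circ x') S(x_{(2)}) = x' x_{(1)} S(x_{(2)})$ and the standard identity $x_{(1)} S(x_{(2)}) \ot x_{(3)} = 1 \ot x$, the right-hand side of \equref{2.1} collapses to $x' \triangleright (x \blacktriangleright a)$, which equals $x \blacktriangleright (x' \triangleright a)$ precisely by the hypothesis \equref{1.1.1}. For \equref{2.2}, the triviality of $\triangleleft$ together with $\varepsilon(x_{(3)} \blacktriangleright a) = \varepsilon(x_{(3)})\varepsilon(a)$ (since $\blacktriangleright$ is a coalgebra map) and the same antipode identity reduce the right-hand side to $(x'x)\varepsilon(a)$, which is exactly $x \circ (x'\triangleleft a)$.

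The conclusion then follows from \thref{main}. There is no serious obstacle here: the only bookkeeping point is to match the two notions of ``opposite'' — the $H^{op}$ appearing in the module-bialgebra hypothesis on $A$ and the opposite multiplication defining $H_\circ$ — after which all the compatibilities boil down to \equref{1.1.1} and counital/antipodal identities.
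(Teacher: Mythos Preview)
Your proposal is correct and follows essentially the same approach as the paper's proof: both apply \thref{main} with the trivial right action $x\triangleleft a=\varepsilon(a)x$ and the Hopf-brace structure $x\circ x'=x'x$ on $H$ from \exref{ex1.1}(2), then observe that \equref{2.2} becomes automatic while \equref{2.1} reduces to \equref{1.1.1}. Your write-up simply spells out in more detail the verifications that the paper leaves implicit.
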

\begin{proof}
This can be easily derived from \thref{main} by seeing $H$ as a cocommutative Hopf brace with $x \circ x' = x'x$ (see \exref{ex1.1}, 2)) and considering the right $A$-module coalgebra structure on $H$ to be trivial, i.e., $x  \triangleleft a = x \varepsilon(a)$ for all $a \in A$, $x \in H$. In this case the compatibility condition \equref{2.2} is trivially fulfilled while \equref{2.1} amounts to \equref{1.1.1} as desired.
\end{proof}

\begin{example}
Consider $k[C]$ to be the group Hopf algebra of the infinite
cyclic group in multiplicative notation and let $d$ be a generator
of $C$. If $k[X]$ denotes the polynomial Hopf algebra then given any $\alpha$, $\beta \in k^{*}$ it is easy to see that
$\triangleright$, $\blacktriangleright: k[C] \ot k[X] \to k[X]$
defined below are left $kC$-module bialgebras:
\begin{eqnarray*}
d^{i}  \triangleright X^{t} = \alpha^{it} X^{t},\quad d^{i}
\blacktriangleright X^{t} = \beta^{it} X^{t}, \quad i \in \ZZ,\, t
\in \NN
\end{eqnarray*}
and, moreover, \equref{1.1.1} is fulfilled. Thus, we obtain a
cocommutative Hopf brace on $k[X] \ot k[C]$ as in \coref{22.22}.
\end{example}

\begin{example}
Let $k$ be a field of characteristic $\neq 2$. $C_n$ denotes the cyclic group of order $n$ generated
by $c$ and $H_{4}$ is the Sweedler's $4$-dimensional Hopf algebra having
$\{1, \, g, \, x, \, gx \}$ as a basis subject to the relations:
$$
g^{2} = 1, \quad x^{2} = 0, \quad x g = -g x
$$
with the coalgebra structure and antipode given by:
$$
\Delta(g) = g \otimes g, \quad \Delta(x) = x \otimes 1 + g \otimes
x, \, \varepsilon(g) = 1, \quad \varepsilon(x) = 0, \quad S(g) = g,
\quad S(x) = -gx.
$$
If $\omega$, $\lambda$ are $n$-th roots on unity in $k$, then $(H_{4}, \triangleright)$ and respectively $(H_{4}, \blacktriangleright)$ are left $k[C_{n}]$-module bialgebras (\cite[Proposition 5.3]{abm}) where $\triangleright$, $\blacktriangleright : k[C_{n}] \ot H_{4} \to H_{4}$ are defined as follows:
\begin{eqnarray*}
c^{i}  \triangleright g = g, \quad c^{i}  \triangleright x = \omega^{i} x,\quad c^{i}  \blacktriangleright g = g, \quad c^{i}  \blacktriangleright x = \lambda^{i} x,
\end{eqnarray*}
for all $i \in \NN$. Moreover, it can be easily seen that \equref{1.1.1} is fulfilled for the two left $k[C_{n}]$-module bialgebras defined above. Therefore, we have a Hopf brace on $H_{4} \ot k[C_{n}]$ as in \coref{22.22}.
\end{example}

\begin{corollary}\colabel{33.33}
Let $A$, $H$ be two Hopf algebras with $H$ cocommutative and consider $(A, \blacktriangleright)$ a left $H^{op}$-module bialgebra and $(H, \triangleleft)$ a right $A$-module bialgebra such that for all $a \in A$, $x$, $x' \in B$ we have:
\begin{eqnarray}
(x' \triangleleft a) x = \bigl(x' \triangleleft (x_{(1)} \blacktriangleright a)\bigl) x_{(2)}\eqlabel{2.2.2}
\end{eqnarray}
Then the tensor product coalgebra $A \ot H$ with the following algebra structures:
\begin{eqnarray*}
(a \ot x) (b \ot y) &=& \bigl(a b_{(1)}  \ot (x \triangleleft b_{(2)}) y \bigl)\\
(a \ot x) \circ (b \ot y) &=& \bigl(a(x_{(1)}  \blacktriangleright b) \ot y x_{(2)} \bigl)
\end{eqnarray*}
is a Hopf brace.
\end{corollary}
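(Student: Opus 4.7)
The proof will follow the same pattern as those of \coref{11.11} and \coref{22.22}: one specializes \thref{main} by choosing appropriate trivializations for its auxiliary actions.

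First, I would view $H$ as a cocommutative Hopf brace by setting $x \circ x' := x'x$; this is legitimate by \exref{ex1.1}, 2), since cocommutativity of $H$ forces $S_H^2 = \Id_H$. Under this choice $H_{\circ}$ coincides with $H^{op}$ as an algebra, so the given left $H^{op}$-module bialgebra $(A, \blacktriangleright)$ is exactly the left $H_\circ$-module bialgebra required by \thref{main}. Next, I would equip $(A, H)$ with the matched pair structure in which $\triangleright: H \ot A \to A$ is the trivial action $x \triangleright a := \varepsilon(x)\, a$, while $\triangleleft: H \ot A \to H$ is the prescribed right $A$-module bialgebra. With these choices the two multiplications \equref{2.5} and \equref{2.6} specialize to
$(a \ot x)(b \ot y) = a b_{(1)} \ot (x \triangleleft b_{(2)}) y$ and $(a \ot x) \circ (b \ot y) = a(x_{(1)} \blacktriangleright b) \ot y x_{(2)}$, exactly the operations in the statement.

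It then remains to check the hypotheses of \thref{main}. The matched pair axioms \equref{mp1}--\equref{mp3} collapse to the right $A$-module bialgebra axioms for $(H, \triangleleft)$ once the counits produced by the trivial $\triangleright$ are absorbed, and \equref{mp4} is handled by a routine computation using the cocommutativity of $H$. Compatibility \equref{2.1} is automatic: both sides reduce to $\varepsilon(x')\,(x \blacktriangleright a)$, because the trivial $\triangleright$ and the identity $\varepsilon(x_{(1)} S(x_{(2)})) = \varepsilon(x)$ collapse its right-hand side.

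The only compatibility that actually requires work is \equref{2.2}. The clean way to see its reduction to \equref{2.2.2} is via the identity
$\sum x_{(1)} S(x_{(2)}) \ot x_{(3)} \ot x_{(4)} = 1_H \ot x_{(1)} \ot x_{(2)}$,
obtained by applying the antipode axiom to the first two factors of $\Delta^{(3)}(x)$. Combined with $x_{(1)} \circ x' = x' x_{(1)}$, this rewrites the right-hand side of \equref{2.2} as $\bigl(x' \triangleleft (x_{(1)} \blacktriangleright a)\bigr) x_{(2)}$, while the left-hand side equals $(x' \triangleleft a) x$; these agree precisely by hypothesis \equref{2.2.2}. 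I expect this last bookkeeping step to be the main obstacle: one must track the four Sweedler copies of $x$ carefully enough to apply the antipode collapse in exactly the right two positions. Once that is done, \thref{main} immediately produces the desired Hopf brace structure on $A \ot H$.
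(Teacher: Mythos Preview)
Your approach is exactly the paper's: specialize \thref{main} by viewing $H$ as the Hopf brace with $x \circ x' = x'x$ and taking the left action $\triangleright$ to be trivial, so that \equref{2.1} is automatic and \equref{2.2} collapses to \equref{2.2.2}.

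One small correction in the extra detail you supply: with $\triangleright$ trivial, condition \equref{mp4} reduces to $(x \triangleleft a_{(1)}) \ot a_{(2)} = (x \triangleleft a_{(2)}) \ot a_{(1)}$, the right-hand analogue of \equref{smash1}; this does \emph{not} follow from cocommutativity of $H$ (it involves only the comultiplication of $A$), so it must either be assumed as part of the module-bialgebra hypothesis or obtained from cocommutativity of $A$. The paper's own proof glosses over this point as well.
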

\begin{proof}
We apply again \thref{main}; $H$ will be seen as a cocommutative Hopf brace with $x \circ x' = x'x$ (see \exref{ex1.1}, 2)) and the left $A$-module coalgebra structure on $H$ will be the trivial one, i.e., $a \triangleright x = x \varepsilon(a)$ for all $a \in A$, $x \in H$. Then \equref{2.1} is trivially fulfilled while  \equref{2.2} comes down to  \equref{2.2.2}.
\end{proof}

We end this section with some generic examples of (cocommutative) Hopf braces and the corresponding solutions of the braid equation.

\begin{examples}\exlabel{gen}
$1)$ Let $A$, $H$ be two Hopf algebras with $H$ cocommutative. If $(A, \triangleright)$ is a left $H$-module bialgebra then by \coref{22.22} the tensor product coalgebra $A \ot H$ with the following algebra structures:
\begin{eqnarray*}
(a \ot x) (b \ot y) = \bigl(a(x_{(1)}  \triangleright b) \ot x_{(2)} y \bigl), \qquad (a \ot x) \circ (b \ot y) = \bigl(a b \ot yx \bigl)
\end{eqnarray*}
is a Hopf brace. If $A$ is cocommutative as well we obtain a cocommutative Hopf brace on $A \ot H$ and a solution of the braid equation given by:
\begin{eqnarray*}
&& \hspace*{43mm}  c \Bigl((a \ot x) \ot (b \ot y) \Bigl) = \\
&&\Bigl(S(x_{(2)}  \triangleright b_{(1)}) \ot S(x_{(1)}) y_{(1)} x_{(3)} \Bigl) \ot \Bigl(T\bigl(S(x_{(5)}  \triangleright b_{(2)})\bigl) a_{(5)} b_{(3)} \ot T\bigl(S(x_{(4)}) \bigl)\Bigl)
\end{eqnarray*}

$2)$ Let $A$, $H$ be two Hopf algebras with $H$ cocommutative. If $(A, \blacktriangleright)$ is a left $H^{op}$-module bialgebra then by \coref{33.33} the tensor product coalgebra $A \ot H$ with the following algebra structures:
\begin{eqnarray*}
(a \ot x) (b \ot y) = \bigl(a b \ot x y \bigl), \qquad (a \ot x) \circ (b \ot y) = \bigl(a(x_{(1)}  \blacktriangleright b) \ot y x_{(2)} \bigl)
\end{eqnarray*}
is a Hopf brace. If $A$ is cocommutative as well we obtain a cocommutative Hopf brace on $A \ot H$ and a solution of the braid equation given by:
\begin{eqnarray*}
&& c \Bigl((a \ot x) \ot (b \ot y) \Bigl) =  \Bigl(x_{(2)}  \blacktriangleright b_{(1)} \ot S(x_{(1)}) y_{(1)} x_{(3)} \Bigl) \ot \\
&&\Bigl(T(x_{(6)}) T(y_{(2)}) T\bigl(S(x_{(4)})\bigl) \blacktriangleright T(x_{(7)} \blacktriangleright b_{(2)}) a (x_{(8)} \blacktriangleright b_{(3)})  \ot T\bigl(S(x_{(5)}) \bigl)\Bigl)
\end{eqnarray*}

$3)$ Let $A$, $H$ be two Hopf algebras with $H$ cocommutative. If $(H, \triangleleft)$ is a right $A$-module bialgebra then by \coref{33.33} the tensor product coalgebra $A \ot H$ with the following algebra structures:
\begin{eqnarray*}
(a \ot x) (b\ot y) = \bigl(a b_{(1)}  \ot (x \triangleleft b_{(2)}) y \bigl), \qquad(a \ot x) \circ (b \ot y) = \bigl(a b \ot y x \bigl)
\end{eqnarray*}
is a Hopf brace. If $A$ is cocommutative as well we obtain a cocommutative Hopf brace on $A \ot H$ and a solution of the braid equation given by:
\begin{eqnarray*} 
c \Bigl((a \ot x) \ot (b \ot y) \Bigl) = \Bigl(b_{(1)} \ot \bigl( S(x_{(1)}  \triangleleft b_{(2)} \bigl) y x_{(2)} \Bigl) \ot \Bigl(T(b_{(3)}) a b_{(5)} \ot T\bigl(S(x_{(3)}) \triangleleft b_{(4)} \bigl)\Bigl)
\end{eqnarray*}
\end{examples}

\begin{example}
Let $\mathfrak{g}$ be a finite dimensional Lie algebra with $k$-basis $\{x_{1},\, x_{2},\, \cdots,\, x_{n}\}$. Then the adjoint representation of $\mathfrak{g}$ uniquely induces a left $U(\mathfrak{g})$-module bialgebra structure on $U(\mathfrak{g})$ given as follows for all $i$, $j \in 1, 2, \cdots, n$ and $k \in \NN^{*}$:
\begin{eqnarray*}
x_{i}  \triangleright x_{j}^{k} =  \sum_{t=0}^{k-1} x_{j}^{t} (x_{i}x_{j} - x_{j}x_{i}) x_{j}^{k-1-t}.
\end{eqnarray*}
Hence, we obtain a cocommutative Hopf brace on $U(\mathfrak{g}) \ot U(\mathfrak{g})$ as in \exref{gen}, $1)$.
\end{example}

\section{On the category of Hopf braces}\selabel{3}

In this section we consider some category-theoretic properties of  Hopf braces. More precisely, we will prove that
the category of Hopf braces (resp. cocommutative Hopf braces) is complete. This allows for the construction of new Hopf braces.
Throughout this section it is convenient to denote the two algebra structures of a Hopf brace $H$ by $m$,
respectively $\overline{m}$. Using this notation, \equref{1.1} can be written equivalently as follows:
\begin{equation}\eqlabel{0.11}
\overline{m}({\rm I} \ot m) = m(m \ot {\rm I}) (\overline{m} \ot S \ot \overline{m})({\rm I} \ot \tau \ot {\rm I} \ot {\rm I})({\rm I} \ot {\rm I}  \ot \tau \ot {\rm I})(\Delta \ot {\rm I} \ot {\rm I} \ot {\rm I})(\Delta \ot {\rm I} \ot {\rm I})
\end{equation}
where $\tau: H \ot H \to H \ot H$ denotes the flip map, i.e., $\tau(x \ot y) = y \ot x$ for all $x$, $y \in H$.

\begin{theorem}\thlabel{limit}
The category $k$-HopfBr is complete, i.e., it has all small limits.
\end{theorem}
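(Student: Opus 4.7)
The plan is to reduce completeness of $k$-HopfBr to the known completeness of the category $k$-CoAlg. Given a small diagram $F\colon \mathcal{D}\to k$-HopfBr, I would first form the limit $(L,\{\pi_i\}_i)$ of the underlying diagram in $k$-CoAlg, and then lift a Hopf brace structure onto $L$ so that every projection $\pi_i$ becomes a morphism of Hopf braces. The goal is then to show that $L$ with this structure realises the limit of $F$ in $k$-HopfBr.

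To transfer the structure I would construct, via the universal property of $L$, both multiplications $m_L,\overline{m}_L\colon L\ot L\to L$ and both units $\eta_L,\eta_{L,\circ}\colon k\to L$. In each case, combining the corresponding operation on $F(i)$ with $\pi_i$ produces a cone over $F$ in $k$-CoAlg — the cone condition uses precisely that $F$ sends every arrow of $\mathcal{D}$ to a Hopf brace morphism, hence to a map preserving that operation — and the universal property of $L$ yields the required coalgebra map. The antipodes $S_L,T_L$ need a little extra care since they are coalgebra anti-morphisms rather than coalgebra morphisms; the trick is to apply the analogous universal property to $L^{\mathrm{cop}}$, which is the limit of $F^{\mathrm{cop}}$ in $k$-CoAlg and shares with $L$ both the underlying vector space and the projections.

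Next I would verify all the axioms — associativity and unit axioms for the two bialgebra structures, the antipode identities, the equality $1_L=1_{L,\circ}$, and the brace compatibility \equref{0.11}. Each of these is an equation between coalgebra maps with codomain $L$; postcomposition with every $\pi_i$ reduces it to the corresponding identity inside $F(i)$, which holds by hypothesis. Since the projections of a limit are jointly monic in $k$-CoAlg, this is enough. The universal property of $L$ in $k$-HopfBr is an analogous argument: any cone of Hopf brace morphisms $\{f_i\colon X\to F(i)\}$ is in particular a cone in $k$-CoAlg, inducing a unique coalgebra map $f\colon X\to L$, and the compatibility of $f$ with every structure map is checked by postcomposing with the $\pi_i$ and invoking joint monicity. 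The cocommutative case is handled identically, since cocommutative coalgebras form a complete subcategory of $k$-CoAlg and all the above constructions preserve cocommutativity. The main obstacle I anticipate is the antipode step, which requires the coopposite detour mentioned above; every other verification is routine once the structure maps on $L$ have been constructed.
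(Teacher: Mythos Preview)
Your proposal is correct and follows essentially the same route as the paper: form the limit of the underlying diagram in $k$-CoAlg, lift both multiplications, both units, and both antipodes via the universal property (with the coopposite detour for the antipodes), and verify every axiom --- including the brace compatibility \equref{0.11} --- by postcomposing with the projections. The only cosmetic difference is that the paper reduces to products and equalizers separately, giving an explicit description of the equalizer as the largest subcoalgebra of $A$ contained in $\{a\in A\mid f(a)=g(a)\}$, whereas you treat arbitrary small limits in one pass; since that subcoalgebra \emph{is} the equalizer in $k$-CoAlg, the two arguments coincide.
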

\begin{proof}
As the construction of limits in the category of Hopf braces is
rather cumbersome we will restrict ourselves to products and
equalizers; this ensures the existence of all small limits in the
aforementioned category. We start by constructing products. To
this end, let $\bigl(H_{i}, m_{i}, \eta_{i}, \overline{m}_{i},
u_{i}, \Delta_{i}, \varepsilon_{i}, S_{i}, T_{i}\bigl)_{i \in I}$
be a family of Hopf braces, i.e., $\bigl(H_{i}, m_{i}, \eta_{i},
\Delta_{i}, \varepsilon_{i}, S_{i}, \bigl)_{i \in I}$ and
respectively $\bigl(H_{i}, \overline{m}_{i}, u_{i}, \Delta_{i},
\varepsilon_{i}, T_{i}\bigl)_{i \in I}$ are Hopf algebras such
that the compatibility condition \equref{0.11} is fulfilled.
Consider $\bigl((H, \Delta, \varepsilon),\, (\pi_{i})_{i \in
I}\bigl)$ to be the product (in $k$-CoAlg) of the underlying
family of coalgebras (see \cite[Theorem 1.1]{A1} for the explicit
construction), where $\pi_{i} : H \to H_{i}$ are coalgebra maps
for all $i \in I$. Since each $m_{i}$, $\eta_{i}$,
$\overline{m}_{i}$, $u_{i}$ is a coalgebra map, the universality
of the coproduct in $k$-CoAlg yields unique coalgebra maps $\eta$,
$u: k \to H$ and $m$, $\overline{m}: H \ot H \to H$ such that the
following diagrams are commutative for all $i \in I$:
\begin{eqnarray*}
\hspace*{-4mm}\xymatrix {& k\ar[d]_{\eta}\ar[dr]^{\eta_{i}}\\
 & {H} \ar[r]_{\pi_{i}} & H_{i}}\,
 \xymatrix {& k\ar[d]_{u}\ar[dr]^{u_{i}}\\
 & {H} \ar[r]_{\pi_{i}} & H_{i}}\,
 \xymatrix {& H \ot H \ar[d]_{m}\ar[dr]^{m_{i}(\pi_{i} \ot \pi_{i})}\\
 & {H} \ar[r]_{\pi_{i}} & H_{i}}\,
  \xymatrix {& H \ot H \ar[d]_{\overline{m}}\ar[dr]^{\overline{m}_{i}(\pi_{i} \ot \pi_{i})}\\
 & {H} \ar[r]_{\pi_{i}} & H_{i}}
\end{eqnarray*}
\begin{eqnarray}
{\rm i.e.,} \,\,\, && \pi_{i} \eta = \eta_{i}\eqlabel{0.2.0},\,\,\,  \pi_{i} u = u_{i}\\\
&& \pi_{i} m  = m_{i} (\pi_{i} \ot \pi_{i}) \eqlabel{0.2}\\
&& \pi_{i} \overline{m}  = \overline{m}_{i} (\pi_{i} \ot \pi_{i}) \eqlabel{0.2.4}
\end{eqnarray}
Since proving that $\bigl(H, m, \eta, \Delta, \varepsilon \bigl)$ and respectively
$\bigl(H, \overline{m}, u, \Delta, \varepsilon \bigl)$ are actually bialgebras goes essentially
in the same vain as the proof of \cite[Theorem 1.5]{A1} we will not include the details here;
we refer the reader to \cite{A1}.

Next we construct antipodes for the two bialgebras $\bigl(H, m, \eta, \Delta, \varepsilon \bigl)$
and $\bigl(H, \overline{m}, u, \Delta, \varepsilon \bigl)$. Using again the universality of the
product in the category $k$-CoAlg we obtain two unique coalgebra morphisms $S$, $T: H \to H^{op,\,cop}$
such that the following diagrams are commutative for all $i \in I$:
\begin{eqnarray*}
\hspace*{-4mm}\xymatrix {& H^{op,\,cop}\ar[rr]^{\pi_{i}}\ar[d]_{S} & {} &{H_{i}^{op,\,cop}}\ar[d]^{S_{i}}\\
 & {H} \ar[rr]_{\pi_{i}} & {} & H_{i}}\,
\xymatrix {& H^{op,\,cop}\ar[rr]^{\pi_{i}}\ar[d]_{T} & {} &{H_{i}^{op,\,cop}}\ar[d]^{T_{i}}\\
 & {H} \ar[rr]_{\pi_{i}} & {} & H_{i}}
\end{eqnarray*}
\begin{eqnarray}
{\rm i.e.,} \,\,\,\, \pi_{i} S = S_{i} \pi_{i} \,\,\, {\rm and}
\,\,\, \pi_{i} T = T_{i} \pi_{i}\eqlabel{0.3.0}
\end{eqnarray}
In order to prove that $S$, $T: H^{op,cop} \to H$ are also algebra
maps it will suffice to show that for all $i \in I$ we have
$\pi_{i} m (S \ot S)\,\tau = \pi_{i} S m$, and respectively
$\pi_{i} \overline{m} (T \ot T)\, \tau= \pi_{i} T \overline{m}$.
Indeed, having in mind that $S_{i}: H_{i}^{op,cop} \to H_{i}$ is
an algebra map for all $i \in I$ we obtain:
\begin{eqnarray*}
\underline{\pi_{i} S} m &\stackrel{\equref{0.3.0}} {=}& S_{i}\underline{ \pi_{i}  m}  \stackrel{\equref{0.2}} {=} \underline{S_{i} m_{i}} (\pi_{i} \ot \pi_{i}) = m_{i} (S_{i} \ot S_{i})\, \underline{\tau (\pi_{i} \ot \pi_{i})}\\
&=& m_{i} (S_{i} \ot S_{i}) (\pi_{i} \ot \pi_{i})\, \tau = m_{i} (\underline{S_{i} \pi_{i}}  \ot \underline{S_{i} \pi_{i}})\, \tau\\
&\stackrel{\equref{0.3.0}} {=}& m_{i} (\underline{\pi_{i} S  \ot
\pi_{i} S})\, \tau = \underline{m_{i} (\pi_{i} \ot \pi_{i})} (S  \ot S)\, \tau\\
&\stackrel{\equref{0.2}} {=}&  \pi_{i} m (S  \ot S)\, \tau
\end{eqnarray*}
as desired. A similar computation shows that $T$ is also an algebra map.

Now exactly as in the proof of \cite{A1} one can prove that $\bigl(H, m, \eta, \Delta, \varepsilon, S, \bigl)$
and respectively $\bigl(H, \overline{m}, u, \Delta, \varepsilon, T\bigl)$ are actually Hopf algebras. We are
left to prove that \equref{0.11} holds. As before, it will suffice to show that for all $i \in I$ the following
compatibility holds true:
$$
\pi _{i} \overline{m}({\rm I} \ot m) = \pi_{i} m(m \ot {\rm I}) (\overline{m} \ot S \ot \overline{m})({\rm I} \ot \tau \ot {\rm I} \ot {\rm I})({\rm I} \ot {\rm I}  \ot \tau \ot {\rm I})(\Delta \ot {\rm I} \ot {\rm I} \ot {\rm I})(\Delta \ot {\rm I} \ot {\rm I})
$$
Indeed, for all $i \in I$, we have:
\begin{eqnarray*}
&&\hspace*{-15mm} \underline{\pi_{i} m}(m \ot {\rm I}) (\overline{m} \ot S \ot \overline{m})({\rm I} \ot \tau \ot {\rm I} \ot {\rm I})({\rm I} \ot {\rm I}  \ot \tau \ot {\rm I})(\Delta \ot {\rm I} \ot {\rm I} \ot {\rm I}) (\Delta \ot {\rm I} \ot {\rm I}) \\
&\stackrel{\equref{0.2}} {=}&m_{i} \underline{(\pi_{i} \ot \pi_{i}) (m \ot {\rm I})} (\overline{m} \ot S \ot \overline{m})({\rm I} \ot \tau \ot {\rm I} \ot {\rm I})({\rm I} \ot {\rm I}  \ot \tau \ot {\rm I})\\
&&(\Delta \ot {\rm I} \ot {\rm I} \ot {\rm I})(\Delta \ot {\rm I} \ot {\rm I}) \\
&=& m_{i} (\underline{\pi_{i} m} \ot \pi_{i}) (\overline{m} \ot S \ot \overline{m})({\rm I} \ot \tau \ot {\rm I} \ot {\rm I})({\rm I} \ot {\rm I}  \ot \tau \ot {\rm I})(\Delta \ot {\rm I} \ot {\rm I} \ot {\rm I})\\
&&(\Delta \ot {\rm I} \ot {\rm I}) \\
&\stackrel{\equref{0.2}} {=}& m_{i} \bigl(\underline{m_{i} (\pi_{i} \ot \pi_{i}) \ot \pi_{i}}\bigl) (\overline{m} \ot S \ot \overline{m})({\rm I} \ot \tau \ot {\rm I} \ot {\rm I})({\rm I} \ot {\rm I}  \ot \tau \ot {\rm I})\\
\end{eqnarray*}
\begin{eqnarray*}
&&(\Delta \ot {\rm I} \ot {\rm I} \ot {\rm I})(\Delta \ot {\rm I} \ot {\rm I}) \\
&=& m_{i} (m_{i} \ot {\rm I})\underline{(\pi_{i} \ot \pi_{i} \ot \pi_{i}) (\overline{m} \ot S \ot \overline{m})}({\rm I} \ot \tau \ot {\rm I} \ot {\rm I})({\rm I} \ot {\rm I}  \ot \tau \ot {\rm I})\\
&&(\Delta \ot {\rm I} \ot {\rm I} \ot {\rm I})(\Delta \ot {\rm I} \ot {\rm I}) \\
&=& m_{i} (m_{i} \ot {\rm I})(\underline{\pi_{i} \overline{m}} \ot \underline{\pi_{i} S} \ot \underline{\pi_{i} \overline{m}})({\rm I} \ot \tau \ot {\rm I} \ot {\rm I})({\rm I} \ot {\rm I}  \ot \tau \ot {\rm I})\\
&&(\Delta \ot {\rm I} \ot {\rm I} \ot {\rm I})(\Delta \ot {\rm I} \ot {\rm I}) \\
&\stackrel{\equref{0.2.4}, \equref{0.3.0}} {=}& m_{i} (m_{i} \ot {\rm I}) \bigl(\underline{\overline{m}(\pi_{i} \ot \pi_{i}) \ot S_{i} \pi_{i} \ot \overline{m}(\pi_{i} \ot \pi_{i})} \bigl)({\rm I} \ot \tau \ot {\rm I} \ot {\rm I})\\
&& ({\rm I} \ot {\rm I}  \ot \tau \ot {\rm I})(\Delta \ot {\rm I} \ot {\rm I} \ot {\rm I})(\Delta \ot {\rm I} \ot {\rm I}) \\
&=& m_{i} (m_{i} \ot {\rm I}) (\overline{m}\ot S_{i} \ot \overline{m}) (\pi_{i} \ot \pi_{i} \ot \pi_{i} \ot \pi_{i} \ot \pi_{i})({\rm I} \ot \tau \ot {\rm I} \ot {\rm I})\\
&& ({\rm I} \ot {\rm I}  \ot \tau \ot {\rm I})(\Delta \ot {\rm I} \ot {\rm I} \ot {\rm I})(\Delta \ot {\rm I} \ot {\rm I}) \\
&=& m_{i} (m_{i} \ot {\rm I}) (\overline{m}\ot S_{i} \ot \overline{m})({\rm I} \ot \tau \ot {\rm I} \ot {\rm I})({\rm I} \ot {\rm I}  \ot \tau \ot {\rm I})\\
&& \bigl(\underline{(\pi_{i} \ot \pi_{i})\Delta}  \ot \pi_{i} \ot \pi_{i} \ot \pi_{i}\bigl)(\Delta \ot {\rm I} \ot {\rm I}) \\
&=& m_{i} (m_{i} \ot {\rm I}) (\overline{m}\ot S_{i} \ot \overline{m})({\rm I} \ot \tau \ot {\rm I} \ot {\rm I})({\rm I} \ot {\rm I}  \ot \tau \ot {\rm I})\\
&& \bigl(\Delta_{i} \pi_{i}  \ot \pi_{i} \ot \pi_{i} \ot \pi_{i}\bigl)(\Delta \ot {\rm I} \ot {\rm I}) \\
&=& m_{i} (m_{i} \ot {\rm I}) (\overline{m}\ot S_{i} \ot \overline{m})({\rm I} \ot \tau \ot {\rm I} \ot {\rm I})({\rm I} \ot {\rm I}  \ot \tau \ot {\rm I})\\
&& (\Delta_{i} \ot I \ot I \ot I )\bigl(\underline{(\pi_{i} \ot \pi_{i})\Delta} \ot \pi_{i} \ot \pi_{i}\bigl) \\
&=& \underline{m_{i} (m_{i} \ot {\rm I}) (\overline{m}\ot S_{i} \ot \overline{m})({\rm I} \ot \tau \ot {\rm I} \ot {\rm I})({\rm I} \ot {\rm I}  \ot \tau \ot {\rm I})}\\
&& \underline{(\Delta_{i} \ot I \ot I \ot I )(\Delta_{i} \ot {\rm I} \ot {\rm I})} (\pi_{i} \ot \pi_{i} \ot \pi_{i}) \\
&\stackrel{H_{i} - {\rm Hopf\,\, brace}} {=}& \overline{m}_{i} ({\rm I }\ot m_{i})(\pi_{i} \ot \pi_{i} \ot \pi_{i})\\
&=& \overline{m}_{i} \bigl(\pi_{i} \ot \underline{m_{i}(\pi_{i} \ot \pi_{i}})\bigl)\\
&\stackrel{\equref{0.2}} {=}& \overline{m}_{i} (\pi_{i} \ot \pi_{i} m)\\
&=&  \overline{m}_{i} \underline{(\pi_{i} \ot \pi_{i})({\rm I} \ot m)}\\
&\stackrel{\equref{0.2.4}} {=}& \underline{\pi _{i} \overline{m}}({\rm I} \ot m)
\end{eqnarray*}

Now let $f$, $g: A \to B$ be two morphisms of Hopf braces.
Consider $S = \{a \in A ~|~ f(a) = g(a)\}$ and let $C$ be the
sum of all subcoalgebras of $A$ contained in $S$. It can be
easily seen (see the proof of \cite[Theorem 1.7]{A1}) that $C$ is
actually a Hopf subalgebra with respect to both Hopf algebra
structures of $A$ and is obviously a Hopf brace. Thus, $(C,\,i)$
is the equalizer of the pair of morphisms $(f,\,g)$ in $k$-HopfBr,
where $i: C \to A$ is the canonical inclusion.
\end{proof}

\begin{example}
Let $\bigl(H_{i}, m_{i}, \eta_{i}, \overline{m}_{i},
u_{i}, \Delta_{i}, \varepsilon_{i}, S_{i}, T_{i}\bigl)_{i=1,2}$ be two Hopf braces and $f$, $g: \bigl(H_{1}, m_{1}, \eta_{1}, \overline{m}_{1},
u_{1}, \Delta_{1}, \varepsilon_{1}, S_{1}, T_{1}\bigl) \to \bigl(H_{2}, m_{2}, \eta_{2}, \overline{m}_{2},
u_{2}, \Delta_{2}, \varepsilon_{2}, S_{2}, T_{2}\bigl)$ homomorphisms of Hopf braces. Then 
$$
H' = \{h \in H_{1} ~|~ h_{(1)} \ot f(h_{(2)}) \ot h_{(3)} = h_{(1)} \ot g(h_{(2)}) \ot h_{(3)}\}
$$
is a Hopf brace. Indeed, it can be easily seen that $(H',\,i)$ is the equalizer of $(f,\,g)$ where $i: H' \to H_{1}$ is the inclusion homomorphism, and the desired conclusion follows by \thref{limit}.
\end{example}

\begin{corollary}
The category $k$-(co)HopfBr is complete, i.e., it has all limits.
\end{corollary}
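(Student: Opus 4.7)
The plan is to deduce this corollary directly from \thref{limit} by checking that each construction used there (products and equalizers) stays inside the cocommutative subcategory. Since having all small products and all equalizers is equivalent to having all small limits, it suffices to verify these two cases.

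For products, let $(H_{i})_{i \in I}$ be a family of cocommutative Hopf braces. The construction of \thref{limit} builds the product by first taking the product $(H, \Delta, \varepsilon)$ of the underlying coalgebras in $k$-CoAlg, and then lifting all the structure maps via the universal property. The key observation is that the full subcategory of cocommutative coalgebras is reflective in $k$-CoAlg (via the largest cocommutative subcoalgebra functor), and in any case the product of cocommutative coalgebras in $k$-CoAlg is automatically cocommutative: if each projection $\pi_i$ lands in a cocommutative coalgebra, then $(\pi_i \otimes \pi_i)\tau \Delta = (\pi_i \otimes \pi_i)\Delta$, and the universal property forces $\tau \Delta = \Delta$ on $H$. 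Hence the product carries a cocommutative coalgebra structure and all the machinery of \thref{limit} (the algebra maps $m,\overline{m}$, units, antipodes, and the brace compatibility \equref{0.11}) transfers unchanged, producing a Hopf brace that is cocommutative.

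For equalizers, given $f, g : A \to B$ in $k$-(co)HopfBr, the construction of \thref{limit} produces the equalizer as the sum $C$ of all subcoalgebras of $A$ on which $f$ and $g$ agree, together with the inclusion $i : C \hookrightarrow A$. Since $A$ is cocommutative, every subcoalgebra of $A$ is cocommutative, so $C$ is cocommutative as well. The argument that $C$ is a Hopf subalgebra with respect to both algebra structures and inherits the brace compatibility is exactly as in \thref{limit}. Thus $(C, i)$ is an equalizer in $k$-(co)HopfBr.

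I do not expect a real obstacle here: the only thing that could go wrong is that the product in $k$-CoAlg might fail to be cocommutative, but the universal property rules this out as noted above. Everything else is simply the observation that the full subcategory $k$-(co)HopfBr of $k$-HopfBr is closed under the constructions used in the proof of \thref{limit}, so the limits computed there automatically land in $k$-(co)HopfBr.
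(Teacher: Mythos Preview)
Your equalizer argument is correct and matches the paper. The gap is in your treatment of products.

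You claim that the product in $k$-CoAlg of a family of cocommutative coalgebras is automatically cocommutative, arguing that $(\pi_i \otimes \pi_i)\tau\Delta = (\pi_i \otimes \pi_i)\Delta$ forces $\tau\Delta = \Delta$ ``by the universal property''. But the universal property of the product $(H,\pi_i)$ only says that coalgebra maps \emph{into} $H$ are determined by their compositions with the $\pi_i$; it gives you no control over maps $H \to H \otimes H$ via the family $(\pi_i \otimes \pi_i)$. In fact the claim is false: if $C$ and $D$ are cocommutative and $C \times D$ denotes their product in $k$-CoAlg, then $C \times D$ is in general strictly larger than $C \otimes D$ and is not cocommutative. (Concretely, any coalgebra map $h:E \to C\otimes D$ with $(\mathrm{I}\otimes\varepsilon_D)h=f$ and $(\varepsilon_C\otimes\mathrm{I})h=g$ is forced to be $h(e)=f(e_{(1)})\otimes g(e_{(2)})$, and one checks easily that for a non-cocommutative $E$ this $h$ need not be a coalgebra map; take for instance $E$ spanned by grouplikes $1,g$ and a $(g,1)$-primitive $x$, with $C=D$ the two-grouplike coalgebra.) Your parenthetical remark about reflectivity does not help either: the largest-cocommutative-subcoalgebra functor, if it exists, is a \emph{coreflection}, so it guarantees closure under colimits, not limits.

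The paper avoids this problem by working from the start in the category of cocommutative coalgebras, where the product is simply the tensor product. One then reruns the lifting argument of \thref{limit} with this (much simpler) product. So the correct route is not to show that the $k$-HopfBr product lands in the cocommutative subcategory, but to redo the construction using the right underlying product.
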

\begin{proof}
The constructions are similar to those in \thref{limit}. Indeed, we only point out that the product in the category of cocommutative coalgebras is simply the tensor product of the given family of coalgebras. Therefore, the product of a given family of cocommutative Hopf braces is the tensor product of the underlying coalgebras with the Hopf brace structure defined exactly as in the proof of \thref{limit}. 
\end{proof}

In \cite[Theorem 1.12]{AGV} it was proven that given a Hopf algebra $A$ the category $\mathfrak{C}(A)$ is equivalent to the full subcategory of $k$-HopfBr whose objects are all Hopf braces whose first Hopf algebra structure is that of $A$. The aforementioned result can be easily extended to an equivalence between  $\mathfrak{C}$ and $k$-HopfBr:

\begin{theorem}
The categories $k$-HopfBr and $\mathfrak{C}$ are equivalent. In particular, the category of bijective $1$-cocycles $\mathfrak{C}$ is complete.
\end{theorem}
\begin{proof}
By \cite[Lemma 1.8]{AGV}, any Hopf brace $(A, m, \eta, \overline{m},
u, \Delta, \varepsilon, S, T)$ induces a left $(A, \overline{m},
u,$ $\Delta, \varepsilon, T)$-module algebra structure on $(A, m, \eta, \Delta, \varepsilon, S)$.
Then the functor $F$: $k$-HopfBr$\to \mathfrak{C}$ defined by:
\begin{eqnarray*}
&&F(A, m, \eta, \overline{m},
u, \Delta, \varepsilon, S, T) = \bigl(1_{A}: (A, \overline{m},
u, \Delta, \varepsilon, T)  \to (A, m, \eta, \Delta, \varepsilon, S)\bigl)\\
&&F(f) = (f,\,f) 
\end{eqnarray*}
for all Hopf braces $(A, m, \eta, \overline{m},
u, \Delta, \varepsilon, S, T)$ and all Hopf brace homomorphisms $f: (A, m_{A}, \eta_{A}, \overline{m}_{A},
u_{A}, \Delta_{A}, \varepsilon_{A}, S_{A}, T_{A}) \to (B, m_{B}, \eta_{B}, \overline{m}_{B},
u_{B}, \Delta_{B}, \varepsilon_{B}, S_{B}, T_{B})$,
provides the claimed equivalence of categories. This can be seen by following precisely the same steps as in the proof of \cite[Theorem 1.12]{AGV}. Hence, in particular, \thref{limit} implies that the category of bijective $1$-cocycles is complete as well.
\end{proof}

\end{document}